\DeclareMathOperator{\sign}{sign}
\theoremstyle{plain}
\newtheorem{thm}{Theorem}
\newtheorem{lem}[thm]{Lemma}
\newtheorem{prop}[thm]{Proposition}
\theoremstyle{definition}
\newtheorem{rmk}[thm]{Remark}
\theoremstyle{remark}
\DeclareMathOperator{\sg}{sg}
\newcommand{\R}{\mathbb R}
\newcommand{\C}{\mathbb C}
\newcommand{\Z}{\mathbb Z}
\newcommand{\E}{\mathcal E}
\renewcommand{\P}{\mathcal P}
\renewcommand{\epsilon}{\varepsilon}
\newcommand{\spin}{\mathcal S}
\keywords{dynamical systems, billiards, special relativity}
\begin{document}

\author{Alfonso Artigue}
\title{Relativistic one-dimensional billiards}
\date{\today}
\email{aartigue@litoralnorte.udelar.edu.uy}
\address{Departamento de Matem\'atica y Estad\'\i stica del Litoral,
Centro Universitario Regional Litoral Norte,
Universidad de la Rep\'ublica,
Florida 1065, Paysand\'u, Uruguay.}

\begin{abstract}
In this article we study the dynamics of one-dimensional relativistic billiards containing particles with positive and negative energy.
We study configurations with two identical positive masses and symmetric positions with
two massless particles between them of negative energy and symmetric positions.
We show that such systems 
have finitely many collisions in any finite time interval.
This is due to a phenomenon we call \textit{tachyonic collision}, 
which occur at small scales and produce changes in the sign of the energy of individual particles.
We also show that depending on the initial parameters the solutions can be bounded with certain periodicity or unbounded while obeying an inverse square law at large distances.
\end{abstract}
\maketitle


\section{Introduction}

This article is a \textit{relativistic continuation} of \cite{Art19} where
we studied the dynamics of a system of particles in the real line, performing elastic collisions and without external forces;
we assumed Newtonian dynamics and allowed the masses to have an arbitrary sign.
A particular situation resulted of interest: two particles of positive mass
with a particle of negative mass between them. 
We call the particle in the middle a \textit{toy graviton} 
because successive bounces accelerate the positive mass particles toward each other, simulating an attracting force.
We toke the limit as its mass tends to zero, with constant kinetic energy, 
obtaining a \textit{limit system} that turned to be equivalent to replace the toy graviton by a potential $U=-k/R^2$, where $k$ is a constant and $R$ the distance between the positive masses. This was already known for positive masses \cite{BTT2007}.
In the present paper we explore these ideas in the context of special relativity.

Let us describe the contents of this article while stating the results we obtained.
In \S\ref{secFreeP} we describe free particles with mass and energy of arbitrary sign and in \S\ref{secDefDyn} we define the dynamics.
We consider elastic collision between such particles.
In \S\ref{secTachCol1} we define \textit{tachyonic collision} as two colliding particles making a system with more momentum than energy, and we show that they are associated to the change in the sign of the energy of the particles. An example and some interpretations are given in \S\ref{secZECol}.

In \S\ref{secMirrorSystem} we consider \textit{mirror systems}, which have just four particles with symmetric positions with respect to the origin.
This simplification allows to reduce the dynamics to a one-dimensional map in \S\ref{secReductionOneDim}.
We assume positions $x_1\leq x_2\leq x_3\leq x_4$ with $x_1=-x_4$ and $x_2=-x_3$,
masses $m_1=m_4=m$ and the second and third particles are massless with energy $E_2=E_3=\E$.
We define $\Delta=\E^2-m^2$.
For this systems we prove that
if $\Delta\geq 0$ then:
\begin{itemize}
 \item The first and final particles goes to infinity in the future and in the past, while the energy of the middle particles tend to zero, in the future and in the past. In particular these systems do not collapse and experience a \textit{bounce}, see Proposition \ref{propEGtoZero}.
\item In Proposition \ref{propSquareLaw} we show that under the previous assumptions, in limit times (past and future)
the energy of the middle particles is proportional to the inverse of the distance of the first and last particles. This motivates to think about the middle particles as some kind of \textit{gravitons} whose energy plays the role of the gravitational energy.
\end{itemize}
In \S\ref{secSmallSP} we consider the case $\Delta<0$ and show that its solutions are
bounded and have no collapse. Also, some of them are periodic and we calculate its period.

Section \S\ref{secTachCol} is devoted to study tachyonic collisions.
In Theorem \ref{thmCondTach} we show that for $\Delta\geq 0$ the solutions have at most two tachyonic collision, while for $\Delta<0$ they appear infinitely many times.
In Proposition \ref{propEscalaTach} it is shown that tachyonic collisions are present at \textit{small scales}.
In \S\ref{secEstimation} we give an estimation for real particles.


\section{Free particle}
\label{secFreeP}
We start with one particle. Its position will be denoted as $x(t)\in\R$ for all $t$ in a real interval (mainly, it will be $t\in\R$).
As a free particle its velocity is a constant $v=\frac{dx}{dt}(t)$.
The particle has constant energy $E\in\R$, $E\neq 0$, and its momentum is defined as $P=Ev$.
The \textit{squared mass} is $\mu=E^2-P^2$.

From these conditions it follows that for a massless particle, $\mu=0$, we have
$E^2=P^2$ and its velocity is $v=P/E=\pm 1$.
That is, we choose units so that the speed of light equals 1.
For $\mu<0$ we have $|v|>1$ and this particle is usually called a \textit{tachyon}.
For $\mu>0$, if $m$ is its mass then $\mu=m^2$.
For $\mu\geq 0$ the particle is a \textit{bradyon}.

\begin{rmk}
With these definitions we avoid the square roots that appears in special relativity textbooks.
We can easily obtain $E^2=\mu/(1-v^2)$, but to write the usual formula $E=m/\sqrt{1-v^2}$ is a problem for us because, even assuming $1-v^2>0$, the signs of energy and mass are forced to coincide, as long as the chosen square root is the positive one.
But, as we will see in \S \ref{secZECol}, we can have a particle changing the sign of its energy during a collision.
\end{rmk}

We derive a simple formula for the velocity that will be useful later.
Consider the quantities
\[
 \left\{
\begin{array}{l}
\sigma=E+P\\
\rho=E-P.
\end{array}
\right.
\]

\begin{rmk}
We see that $\sigma\rho=\mu$, $2E=\sigma+\mu/\sigma$
and $2P=\sigma-\mu/\sigma$. Thus
\begin{equation}
 \label{ecuVel}
 v=\frac{P}{E}=\frac{\sigma-\mu/\sigma}{\sigma+\mu/\sigma}
 =\frac{\sigma^2-\mu}{\sigma^2+\mu}.
\end{equation}
\end{rmk}

\begin{rmk}[Interpretation of negative energy]
It is important to remark that all we will do only depends on the squared masses and not on the mass.
However, we find useful to have a way of thinking about positive mass with negative energy.
Besides, this phenomenon is unavoidable in the dynamics we will study, as we will see in \S \ref{secTachCol}.
To choose a square root $m$ of $\mu$ we need extra structure: the particle will carry a watch.
The hand of the watch can spin clockwise or counterclockwise.
The spin velocity is defined as
$$\spin=\frac mE$$
provided that $m$ is any chosen square root of $\mu$, $m^2=\mu$.
Since $E^2=\mu/(1-v^2)$ we have $|E| \sqrt{1-v^2}=|m|$ and $|\spin|=\sqrt{1-v^2}$.
Also, $\sign(\spin)=\sign(m/E)$.
Therefore, for $m\neq 0$ we have
\[
 E=\frac{m}{\spin}=\sign(m/E)\frac m{\sqrt{1-v^2}}.
\]
A particle whose watch spins counterclockwise ($\spin<0$) is to be interpreted as traveling backward in time.
With full generality (any real value of $m$) we can write $m=\spin E$, where vanishing mass is equivalent to a stopped watch.
\end{rmk}

\section{Definition of the dynamics}
\label{secDefDyn}
In this section we consider one-dimensional dynamics of $N\geq 3$ particles with positions $x_1\leq x_2\leq \dots\leq x_N$.
The time evolution is defined by the rules:
\begin{enumerate}
 \item between collisions the particles are free: have constant velocity $v_i=P_i/E_i$,
 \item the collisions are elastic: the energy and momentum are preserved,
 \item there are no multiple collisions at the same time and at the same point.
\end{enumerate}

\begin{rmk}
We have to rule out multiple simultaneous collisions at the same point because, in general, there is no satisfactory way to define the result. This is because we can approximate it by ordering the collisions in different ways and obtaining different results, which implies a discontinuity in the dynamics. However, this is a non-generic event.
We remark that more than one collision at the same time but at different places (a description of the events that depend on the reference frame) are allowed and will be used in \S \ref{secMirrorSystem}.
\end{rmk}

\subsection{Collisions}
The previous rules are sufficient to \textit{resolve} the collisions, as we will see in Lemma \ref{lemaSudoku}.
If we use primes to denote the final energy and momentum of the particles $i,j$ after a collision, the conservation of energy and momentum gives us:
$E_i + E_j = E'_i + E'_j$ and $P_i+P_j = P'_i+P'_j$.
To solve these equations we follow \cite{Lap86} and
define
\[
 \left\{
\begin{array}{l}
\sigma_i=E_i+P_i\\
\rho_i=E_i-P_i
\end{array}
\right.
\]
for each particle and the corresponding \textit{primed} quantities after the collision.
Jointly with $E_*^2-P_*^2=\mu_*$ ($*=i,j$) we have the system of equations:
\begin{equation}
 \label{ecuSudoku}
 \text{before collision}\left\{
 \begin{array}{l}
 \sigma_i+\sigma_j=s\\
 \rho_i+\rho_j=r\\
 \sigma_i\rho_i=\mu_i\\
 \sigma_j\rho_j=\mu_j
 \end{array}
\right.
\Rightarrow
 \text{after collision}\left\{
 \begin{array}{l}
 \sigma'_i+\sigma'_j=s\\
 \rho'_i+\rho'_j=r\\
 \sigma'_i\rho'_i=\mu_i\\
 \sigma'_j\rho'_j=\mu_j
 \end{array}
\right.
\end{equation}
where $s=\sigma_i+\sigma_j$ and $r=\rho_i+\rho_j$ are constants of the collision (energy and momentum conservation).

\begin{rmk}[Collision condition]
Given two particles $i,j$ the condition for a collision is $v_i-v_j\neq 0$.
The fact that the collision occurs in the future or in the past depends on the positions, this condition just guarantees
a collision (in positive or negative time).
As the velocity of a particle $*$ is
$
v_*=\frac{P_*}{E_*}=\frac{\sigma_*-\rho_*}{\sigma_*+\rho_*}$
the collision condition is
\begin{equation}
 \label{ecuCondiciónChoque}
 \sigma_i\rho_j-\sigma_j\rho_i \neq 0 \Leftrightarrow v_i-v_j\neq 0.
\end{equation}
\end{rmk}

\begin{lem}
\label{lemaSudoku}
If $sr\neq 0$ then the system of equations \eqref{ecuSudoku} has just one other solution:
 \[
\left\{
\begin{array}{l}
 \displaystyle \sigma'_i=\rho_i\frac{s}{r}=\sigma_i-\frac{\sigma_i\rho_j-\sigma_j\rho_i}
 {r}\\
 \displaystyle \sigma'_j=\rho_j\frac{s}{r}=\sigma_j+\frac{\sigma_i\rho_j-\sigma_j\rho_i}
 {r}\\
 \end{array}
\right.
\left\{
\begin{array}{l}
 \displaystyle \rho'_i=\sigma_i\frac{r}{s}=\rho_i+\frac{\sigma_i\rho_j-\sigma_j\rho_i}
 {s}\\
 \displaystyle \rho'_j=\sigma_j\frac{r}{s}=\rho_j-\frac{\sigma_i\rho_j-\sigma_j\rho_i}
 {s}\\
 \end{array}
\right.
\]
which is different from the original if and only if
the collision condition \eqref{ecuCondiciónChoque} holds.
\end{lem}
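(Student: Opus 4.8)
The plan is to regard the post-collision half of \eqref{ecuSudoku} as four equations in the four unknowns $\sigma'_i,\sigma'_j,\rho'_i,\rho'_j$, to reduce it to a single quadratic, and to exploit the fact that the pre-collision quantities already furnish one solution. First I would use the two linear constraints to eliminate half of the unknowns: since $\sigma'_j=s-\sigma'_i$ and $\rho'_j=r-\rho'_i$, the problem collapses to solving $\sigma'_i\rho'_i=\mu_i$ together with the relation coming from $\sigma'_j\rho'_j=\mu_j$, namely $(s-\sigma'_i)(r-\rho'_i)=\mu_j$. Expanding the latter and substituting $\sigma'_i\rho'_i=\mu_i$ turns it into the affine relation $r\sigma'_i+s\rho'_i=sr+\mu_i-\mu_j$. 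Because $s\neq 0$ this expresses $\rho'_i$ as an affine function of $\sigma'_i$, and feeding it back into $\sigma'_i\rho'_i=\mu_i$ yields the quadratic $r(\sigma'_i)^2-(sr+\mu_i-\mu_j)\sigma'_i+s\mu_i=0$ with leading coefficient $r$.

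Since $r\neq 0$ this quadratic has at most two roots, and by the previous step each root determines $\rho'_i$, and then $\sigma'_j$ and $\rho'_j$, uniquely; hence the whole system has at most two solutions. One of them is the original tuple $(\sigma_i,\sigma_j,\rho_i,\rho_j)$, which solves the post-collision equations simply because they are literally the same equations as the pre-collision ones. For the second solution I would just verify that the proposed formulas work: with $\sigma'_i=\rho_i s/r$ and $\rho'_i=\sigma_i r/s$ one gets $\sigma'_i\rho'_i=\rho_i\sigma_i=\mu_i$ at once, and analogously $\sigma'_j\rho'_j=\mu_j$, while $\sigma'_i+\sigma'_j=(s/r)(\rho_i+\rho_j)=s$ and $\rho'_i+\rho'_j=(r/s)(\sigma_i+\sigma_j)=r$. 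Thus these formulas give a genuine solution, and together with the original they exhaust all solutions.

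Finally I would identify when the two solutions coincide. They agree precisely when $\sigma_i=\rho_i s/r$, i.e.\ when $\sigma_i r-\rho_i s=0$. The key algebraic identity is $\sigma_i r-\rho_i s=\sigma_i(\rho_i+\rho_j)-\rho_i(\sigma_i+\sigma_j)=\sigma_i\rho_j-\sigma_j\rho_i$, so the two solutions coincide exactly when $\sigma_i\rho_j-\sigma_j\rho_i=0$, that is, when the collision condition \eqref{ecuCondiciónChoque} fails; equivalently, the new solution differs from the original if and only if that condition holds.

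The only delicate point, and the one I would watch most carefully, is to keep every division legitimate. The hypothesis $sr\neq 0$ is exactly what is needed: I divide by $s$ to obtain the affine expression for $\rho'_i$ and by $r$ to normalize the quadratic, but I deliberately avoid dividing by $\sigma'_i$ when recovering $\rho'_i$, since $\sigma'_i$ may vanish (for instance for a massless particle with $\mu_i=0$), using the affine relation instead. With that bookkeeping the correspondence between roots of the quadratic and solutions of the system is valid in all cases, and the substance of the argument reduces to the elementary verifications above; I do not expect a genuine obstacle beyond this care with the degenerate cases.
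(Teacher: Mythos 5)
The paper itself gives no argument here --- it explicitly leaves the proof of Lemma \ref{lemaSudoku} to the reader --- so there is no proof to compare yours against; it must stand on its own, and it essentially does. Your reduction is sound: eliminating $\sigma'_j$ and $\rho'_j$ via the linear constraints, substituting $\sigma'_i\rho'_i=\mu_i$ into $(s-\sigma'_i)(r-\rho'_i)=\mu_j$ to obtain the affine relation $r\sigma'_i+s\rho'_i=sr+\mu_i-\mu_j$, and then the quadratic $r(\sigma'_i)^2-(sr+\mu_i-\mu_j)\sigma'_i+s\mu_i=0$ with the correspondence between its roots and solutions of the system, is correct; so are the verification that the displayed formulas solve the system and the identity $\sigma_i r-\rho_i s=\sigma_i\rho_j-\sigma_j\rho_i$ governing when the two solutions coincide. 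Your care about dividing only by $s$ and $r$, never by $\sigma'_i$, is exactly the right bookkeeping for the massless case $\mu_i=0$.

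The one loose end is the step ``together with the original they exhaust all solutions.'' The inference from ``at most two solutions'' plus ``here are two solutions'' is only valid when the two exhibited solutions are distinct, i.e.\ when the collision condition \eqref{ecuCondiciónChoque} holds. In the degenerate case $\sigma_i\rho_j-\sigma_j\rho_i=0$ the original and the displayed solution coincide, and your argument as written leaves open the possibility of a second root of the quadratic distinct from $\sigma_i=\rho_i s/r$, i.e.\ a genuine extra solution, which the lemma implicitly excludes. The fix is one line inside your own framework: since $\mu_j=(s-\sigma_i)(r-\rho_i)$, the middle coefficient satisfies $sr+\mu_i-\mu_j=r\sigma_i+s\rho_i$, so the quadratic factors exactly as $r\left(\sigma-\sigma_i\right)\left(\sigma-\rho_i\tfrac{s}{r}\right)$. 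Hence its root set is precisely $\{\sigma_i,\ \rho_i s/r\}$ in all cases (a double root in the degenerate one), and the solution set of \eqref{ecuSudoku} is exactly the original solution together with the displayed one, which completes the proof of the full statement.
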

The proof of Lemma \ref{lemaSudoku} is left to the reader.
Some remarks are in order.

\begin{rmk}
 The condition $sr=0$ has probability zero, thus the dynamics is generically well defined.
\end{rmk}

\begin{rmk}
From the definitions we have:
 \[
 \begin{array}{rl}
  sr & =(\sigma_i+\sigma_j)(\rho_i+\rho_j)= (E_i+P_i + E_j+P_j)(E_i-P_i + E_j-P_j)\\
     & =(\E+\P)(\E-\P)=\E^2-\P^2,  \\
 \end{array}
 \]
where $\E=E_i+E_j$ and $\P=P_i+P_j$.
The quantity $sr=\E^2-\P^2$ represents \textit{the rest mass of this system}.
\end{rmk}

\begin{rmk}
Notice that
the collision of two particles with the same mass is
always solved (in a classical or a relativistic framework) by exchanging energy and momentum, or equivalently we can think that they just interact by exchanging their labels.
\end{rmk}

\begin{rmk}
\label{rmkChoqueMasasIguales}
If we consider classical mechanics and solve a collision we find a denominator in the formula of the final velocity as $m_i+m_j$.
Therefore, $m$ cannot collide with $-m$.
It is remarkable that
from the relativistic viewpoint we are considering, this collision is always allowed.
Indeed, by direct inspection of Equations \eqref{ecuSudoku} we see that
if $\mu_i=\mu_j$ then
 $\sigma'_i=\sigma_j$,
 $\sigma'_j=\sigma_i$,
 $\rho'_i=\rho_j$,
 $\rho'_j=\rho_i$
 is the solution of the collision.
 Considering the previous remark we can say that a particle of mass $m$ does not interact with particles of mass $m$ or $-m$.
\end{rmk}

\subsection{Tachyonic collisions}
\label{secTachCol1}
In the classical case the kinetic energy is $mv^2/2$ and its sign only depends on the sign of the mass, thus the sign of the energy is preserved after collisions.
In the relativistic case the sign of the energy can change during a collision, see an example in \S\ref{secZECol}.
A \textit{tachyonic collision} is a collision with $sr=\E^2-\P^2<0$.

\begin{prop}
\label{propTachColi}
 If $\mu_i\geq 0$ then the energy of the particle $i$ changes its sign during a collision
if and only if it is a tachyonic collision.
\end{prop}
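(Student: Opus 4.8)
The plan is to track the product $E_i E_i'$ of the initial and final energy of particle $i$, where I use the primed notation of the excerpt for the post-collision quantities. Since the dynamics requires every particle to carry nonzero energy ($E_i\neq 0$ and $E_i'\neq 0$), the energy of $i$ reverses sign during the collision precisely when $E_i E_i' < 0$. Thus the whole proposition reduces to determining the sign of this single product, and the hypothesis $\mu_i\geq 0$ should enter exactly at the point where that sign is pinned down.

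First I would invoke Lemma \ref{lemaSudoku} to write the final energy in terms of the unprimed quantities and the collision constants $s,r$. Writing $E_i' = \tfrac12(\sigma_i'+\rho_i')$ and substituting $\sigma_i'=\rho_i s/r$ and $\rho_i'=\sigma_i r/s$ gives $E_i' = \tfrac12(\rho_i s/r + \sigma_i r/s)$. Multiplying by $E_i = \tfrac12(\sigma_i+\rho_i)$ and clearing denominators over the common factor $sr$ yields
\[
E_i E_i' = \frac{\sigma_i\rho_i(s^2+r^2) + \sigma_i^2 r^2 + \rho_i^2 s^2}{4sr}.
\]
The crux of the argument—and the step I expect to be the only one requiring any cleverness—is to reorganize this numerator by completing the square. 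Using the identity $\sigma_i^2 r^2 + \rho_i^2 s^2 = (\sigma_i r + \rho_i s)^2 - 2\sigma_i\rho_i sr$ together with $\sigma_i\rho_i=\mu_i$, the numerator collapses to $\mu_i(s-r)^2 + (\sigma_i r + \rho_i s)^2$, so that
\[
E_i E_i' = \frac{\mu_i (s-r)^2 + (\sigma_i r + \rho_i s)^2}{4sr}.
\]

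From here the conclusion is immediate. Because $\mu_i\geq 0$, both summands in the numerator are nonnegative, so the numerator is $\geq 0$; and since $E_i\neq 0$ and $E_i'\neq 0$ the product $E_i E_i'$ is nonzero, which forces the numerator to be strictly positive (one can also check directly that it vanishes only when some particle has zero energy, which the dynamics excludes). Consequently $\sign(E_i E_i') = \sign(sr)$, so the energy of particle $i$ changes sign if and only if $sr<0$, which is exactly the definition of a tachyonic collision. The main obstacle is thus purely the algebraic repackaging that exposes $\mu_i$ with the perfect-square coefficient $(s-r)^2$; the hypothesis $\mu_i\geq 0$ does no work until that form is in hand, after which the equivalence falls out with no case analysis.
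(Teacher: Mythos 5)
Your proposal is correct and takes essentially the same route as the paper: both invoke Lemma \ref{lemaSudoku} to compute the product $E_iE_i'$ and show its sign equals $\sign(sr)$, so the sign of the energy flips exactly when $sr<0$. The only difference is how positivity of the numerator is verified—the paper notes that $(\sigma_i+\rho_i)(\rho_i s^2+\sigma_i r^2)>0$ because $\sigma_i,\rho_i$ share a sign when $\mu_i\geq 0$, whereas you complete the square to write the same quantity as $\mu_i(s-r)^2+(\sigma_i r+\rho_i s)^2$, which is a pleasant but equivalent repackaging.
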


\begin{proof}
 From definitions and Lemma \ref{lemaSudoku} we have that
 \[\left\{
 \begin{array}{l}
 \sigma_i\rho_i=\mu_i,\\
2E_i=\sigma_i+\rho_i,\\
2E'_i=\sigma'_i+\rho'_i=\rho_i \frac sr+\sigma_i \frac rs=\frac{\rho_i s^2+\sigma_ir^2}{rs}.\\
 \end{array}
\right.
\]
Since $\mu_i=\sigma_i\rho_i\geq 0$ we have that $(\sigma_i+\rho_i)(\rho_i s^2+\sigma_ir^2)>0$.
Thus, $\sg(E_iE_i')=\sg(rs)$.
\end{proof}

%

\subsection{Example of a zero-energy tachyonic collision}
\label{secZECol}
Let us consider a curious collision.
Suppose two particles, the particle 1 is at rest with mass $m=1$ and the particle 2 is massless, has energy $E_2=-1$, velocity $-1$ and momentum $P_2=1$.
In this way the system has vanishing energy $\E=0$ and momentum $\P=1$.
In particular, as $\E^2-\P^2=-1<0$, their collision is tachyonic.
Lemma \ref{lemaSudoku} gives us the energies and momenta of both particles after the collision: $E_1'=-1$, $P_1'=0$, $E_2'=1$ and $P_2'=1$.
It is remarkable that particle 1 continues at rest after the collision.
Its energy turns negative since the collision is tachyonic.

\section{Mirror systems}
\label{secMirrorSystem}
In this section we extend the study of
\cite{Art19}. To simplify the analysis we consider a symmetric system.

\subsection{General properties of mirror systems}

Suppose $N=4$ particles under the following conditions:
$x_1=-x_4$, $x_2=-x_3$, $\mu_1=\mu_4=\mu>0$.
The \textit{middle particles} are massless: $\mu_2,\mu_3=0$.
The collisions between the particles $1-2$ are simultaneous with $3-4$.
Due to the symmetry the particles $2-3$ have collisions only at $x=0$
and there they exchange energy and momentum as explained in Remark \ref{rmkChoqueMasasIguales}.

Suppose a collision $1-2$.
As particle 1 is on the left and particle 2 is massless we have
$v_2=-1$.
In this case, $E_2=-P_2$ before the collision.
Thus, $\sigma_2=0$ and $\rho_2=2 E_2$.
Next, particle 2 collides with particle 3 exchanging their momenta while both have the same energy, again, due to symmetry.
After this second collision we have $v_2'=-1$, $E_2'=-P_2'$, $\sigma_2'=0$ and $\rho_2'=2E_2'$.

Applying Lemma \ref{lemaSudoku} we obtain
\begin{equation}
 \label{ecuE2Espejo}
\left\{
\begin{array}{l}
E'_2=\frac 12\sigma_2'=\frac 12\rho_2\frac sr=E_2\frac{\sigma_1+\sigma_2}{\rho_1+\rho_2}=\frac{E_2\sigma_1}{2E_2+\mu/\sigma_1}\\
 \sigma'_1=\rho_1\frac sr=\frac{\mu}{\sigma_1}\frac sr=\frac{\mu}{2E_2+\mu/\sigma_1}
\end{array}
\right.
\end{equation}

This reduces the dynamics to the study of the iterations of the two-dimensional map $(\sigma_1,E_2)\mapsto (\sigma_1',E_2')$, but it can be simplified even more.

\subsubsection{Reduction to a one-parameter system}
\label{secReductionOneDim}
As $\sigma_1=E_1+P_1$ and $\rho_1=E_1-P_1$ we have
$\sigma_1+\rho_1=2E_1=2E-2E_2$, where $\E=E_1+E_2$ (a constant due to symmetry).
Thus, $\sigma_1+\mu/\sigma_1=2\E-2E_2$,
\begin{equation}
 \label{ecuSimpliSigma}
 2E_2+\mu/\sigma_1=2\E-\sigma_1
\text{ and }
 \sigma'_1=\frac{\mu}{2\E-\sigma_1}.
\end{equation}
Thus, the dynamics is reduced to the one-dimensional map
\begin{equation}
\label{ecuReduc1Dim}
f(\sigma)=\frac{\mu}{2\E-\sigma}
\end{equation}
for any $\sigma\in\R, \sigma\neq 2\E$.
Define
\[
 \Delta=\E^2-\mu.
\]
As we will see, the sign of $\Delta$ characterizes the dynamics. 
For instance, 
the fixed points of $f$ are determined by the quadratic equation
$\sigma^2-2\E\sigma+\mu=0$ whose discriminant is $\Delta$. 
Thus, in what follows $\Delta$ will be called as \textit{discriminant} of the system.

\subsubsection{A constant of motion}
In what follows we derive a relation that will be useful in the next sections.
Consider $\tau_n$ the time elapsed between the collisions  of the particle 1 (with particle 2);
and $v_1^n$ the velocity of particle 1 between such collision.
Let $x_j^n$ denote the position of particle $j$ when it has the $n$-th collision.
Let $E_j^n, P_j^n, v_j^n$ be the energy, momentum and velocity of the particle $j$ between its $n$ and $n+1$ collision, respectively.

\begin{lem}
\label{lemSuperRecurrencia}
The number
$
 {x_1^n E_2^n}/{\sigma_1^n}
$
is independent of 
$n\in\Z$.
\end{lem}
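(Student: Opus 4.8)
The plan is to prove the stronger multiplicative statement that the transition from epoch $n$ to epoch $n+1$ factors into two reciprocal pieces. Writing $C_n = x_1^n E_2^n/\sigma_1^n$, I would establish separately the two relations
\[
\frac{E_2^{n+1}}{\sigma_1^{n+1}}=\frac{E_2^n\,\sigma_1^n}{\mu}
\qquad\text{and}\qquad
\frac{x_1^{n+1}}{x_1^n}=\frac{\mu}{(\sigma_1^n)^2},
\]
since multiplying them and regrouping gives $x_1^{n+1}E_2^{n+1}/\sigma_1^{n+1}=x_1^nE_2^n/\sigma_1^n$ directly. Because the reduced map \eqref{ecuReduc1Dim} is invertible, constancy across a single step propagates to independence of $n$ over all of $\Z$.

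The first relation is purely algebraic. Identifying the unprimed quantities in \eqref{ecuE2Espejo} with the pre-collision data $\sigma_1^n,E_2^n$ of epoch $n$ and the primed ones with $\sigma_1^{n+1},E_2^{n+1}$, the two formulas share the denominator $2E_2^n+\mu/\sigma_1^n$, so dividing one by the other cancels it and leaves $E_2^{n+1}/\sigma_1^{n+1}=E_2^n\sigma_1^n/\mu$ at once.

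The second relation is kinematic and is where the care is needed. After the $n$-th collision particle $1$ sits at $x_1^n<0$ and moves with the constant velocity $v_1^n$ associated to $\sigma_1^n$, while the massless particle $2$ leaves $x_1^n$ toward the origin at speed $+1$. I would follow particle $2$ on its round trip: it reaches the origin, undergoes the symmetric $2$–$3$ collision, which by Remark \ref{rmkChoqueMasasIguales} merely reverses its velocity to $-1$ and preserves its energy, and then returns to meet particle $1$. Writing the worldlines with $\tau=t-t_n$ as $x_1=x_1^n+v_1^n\tau$ and $x_2=-\tau-x_1^n$ (the latter valid once particle $2$ has turned around) and equating them yields $\tau_n=-2x_1^n/(1+v_1^n)$, whence
\[
\frac{x_1^{n+1}}{x_1^n}=1+v_1^n\frac{\tau_n}{x_1^n}=\frac{1-v_1^n}{1+v_1^n}.
\]
Substituting $v_1^n=((\sigma_1^n)^2-\mu)/((\sigma_1^n)^2+\mu)$ from \eqref{ecuVel} collapses the right-hand side to $\mu/(\sigma_1^n)^2$, as required.

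The main obstacle I anticipate is the geometric bookkeeping of this round trip rather than any delicate estimate. One must confirm that particle $2$ meets particle $1$ strictly after the $2$–$3$ collision, so that the second worldline above is the correct branch; this amounts to $\tau_n\ge -x_1^n$, i.e. $|v_1^n|<1$, which holds because particle $1$ is a bradyon. I would also keep track of signs ($x_1^n<0$ and $1+v_1^n>0$) to ensure $\tau_n>0$, and dispose of the degenerate configuration $x_1^n=0$, for which the quantity vanishes trivially.
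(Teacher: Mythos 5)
Your proposal is correct and follows essentially the same route as the paper: both rest on the kinematic identity $x_1^{n+1}/x_1^n=(1-v_1^n)/(1+v_1^n)=\mu/(\sigma_1^n)^2$ (the paper derives it from the round-trip time $\tau_n=-x_1^n-x_1^{n+1}$ of the massless particle, which is the same computation as your worldline intersection) combined with the algebra of \eqref{ecuE2Espejo}, your only cosmetic difference being that you divide the two formulas of \eqref{ecuE2Espejo} to get $E_2'/\sigma_1'=E_2\sigma_1/\mu$ directly, while the paper tracks $E_2^{n+1}/E_2^n$ and recognizes the product ratio as $\sigma_1^{n+1}/\sigma_1^n$. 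Your explicit check that particle 2 meets particle 1 only after the $2$--$3$ collision is a detail the paper leaves implicit, but it does not change the substance of the argument.
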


\begin{proof}
On the one hand, as $c=1$ we have
$\tau_n=-x_1^n-x_1^{n+1}$ (the negative signs are due to the fact that the position
$x_1^n$ is negative).
On the other hand, $v_1^n=(x_1^{n+1}-x_1^n)/\tau_n$.
Therefore, $$v_1^n=\frac{x_1^{n+1}-x_1^n}{-x_1^n-x_1^{n+1}}\text{ and }
\frac{x_1^{n+1}}{x_1^n}=\frac{1-v_1^n}{1+v_1^n}=\frac\mu{(\sigma_1^n)^2}.$$
For the last equality we have applied Equation \eqref{ecuVdeSigma}.
From
\eqref{ecuE2Espejo} and \eqref{ecuSimpliSigma} we have
$$\frac{E_2^{n+1}}{E_2^n}=
\frac{\sigma_1^n}{2E_2^n+\mu/\sigma_1^n}=\frac{\sigma_1^n}{2\E-\sigma_1^n}.
$$
Then
\[
 \frac{x_1^{n+1} E_2^{n+1}}{x_1^nE_2^n}=\frac\mu{\sigma_1^n(2\E-\sigma_1^n)}=\frac{\sigma_1^{n+1}}{\sigma_1^n}.\qedhere
\]
\end{proof}


\subsection{Non-negative discriminant}
We will assume that $\Delta=\E^2-\mu\geq 0$.
Considering the map $f$ from \S \ref{secReductionOneDim}
we need to know that the fixed points $\sigma_*$ satisfies
  \begin{equation}
  \label{ecuFixedPtEq}
\sigma_*^2    -2\E\sigma_*+\mu=0.
  \end{equation}
If in addition $\E>0$
the fixed points are labeled as:
 \[
  \left\{
  \begin{array}{l}
    \sigma_{at}=\E-\sqrt{\Delta}\\
    \sigma_{re}=\E+\sqrt{\Delta}.
  \end{array}
  \right.
 \]
and the derivative at the fixed points is
$$\frac{df}{d\sigma}(\sigma_{at})=\frac{\mu}{(2\E-\sigma_{at})^2}=\frac{\mu}{2\E^2-\mu+2\E\sqrt{\Delta}}<1$$
Analogously,
$$\frac{df}{d\sigma}(\sigma_{re})=\frac{\mu}{2\E^2-\mu-2\E\sqrt{\Delta}}>1$$
thus, $\sigma_{at}$ is an attractor and $\sigma_{re}$ is a repeller.
If $\E<0$ then we define  (changing signs)
\begin{equation}
 \label{ecuAtRepENeg}
  \left\{
  \begin{array}{l}
    \sigma_{at}=\E+\sqrt{\Delta}\\
    \sigma_{re}=\E-\sqrt{\Delta}.
  \end{array}
  \right.
\end{equation}
and again $\sigma_{at}$ is an attractor and $\sigma_{re}$ is a repeller.
For $\Delta=0$ there is just one fixed point $\sigma_*=\E$ which is neither attractor nor repeller.
In what follows we 
study the time limit of the systems we are considering.

\begin{prop}
\label{propEGtoZero}
If $\Delta\geq 0$ then $x_1^n\to-\infty$, $x_4^n\to+\infty$ as $n\to\pm\infty$ and the energy of particles 2 and 3 goes to zero as time goes to infinity.
\end{prop}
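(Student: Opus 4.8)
The plan is to extract everything from the one–dimensional map $f(\sigma)=\mu/(2\E-\sigma)$ of \eqref{ecuReduc1Dim} together with the three ratio identities already produced inside the proof of Lemma \ref{lemSuperRecurrencia}: $\sigma_1^{n+1}=f(\sigma_1^n)$, $E_2^{n+1}/E_2^n=\sigma_1^n/(2\E-\sigma_1^n)$ and $x_1^{n+1}/x_1^n=\mu/(\sigma_1^n)^2$. The decisive step is to determine the limit of $(\sigma_1^n)$; the decay of $E_2^n$ and the escape of $x_1^n$ then fall out of the other two identities. Throughout I assume the orbit is not one of the two fixed points of $f$, since those are the exceptional self–similar solutions for which one time direction collapses instead of escaping.

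\emph{Step 1 (convergence of $\sigma_1^n$, case $\Delta>0$).} As $f$ is a Möbius transformation with the two fixed points $\sigma_{at},\sigma_{re}$ of \eqref{ecuFixedPtEq}, I would linearize it exactly by setting $w_n=(\sigma_1^n-\sigma_{at})/(\sigma_1^n-\sigma_{re})$. A one–line computation gives $w_{n+1}=\lambda w_n$ with multiplier $\lambda=f'(\sigma_{at})=\sigma_{at}/\sigma_{re}$, and in either sign of $\E$ one has $0<\lambda<1$ by the derivative computations recorded just before \eqref{ecuAtRepENeg}. Hence $w_n=\lambda^n w_0\to0$ as $n\to+\infty$ and $\to\infty$ as $n\to-\infty$, which is to say $\sigma_1^n\to\sigma_{at}$ in the future and $\sigma_1^n\to\sigma_{re}$ in the past. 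No monotonicity or confinement argument is needed, and the orbit automatically avoids the pole $2\E$.

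\emph{Step 2 (decay and escape, $\Delta>0$).} At a fixed point $2\E-\sigma_*=\mu/\sigma_*$ and $\sigma_{at}\sigma_{re}=\mu$, so the energy ratio obeys $E_2^{n+1}/E_2^n=\sigma_1^n/(2\E-\sigma_1^n)\to\sigma_{at}^2/\mu=\sigma_{at}/\sigma_{re}\in(0,1)$ as $n\to+\infty$; thus $E_2^n\to0$ geometrically, and the symmetric estimate with limit $\sigma_{re}/\sigma_{at}>1$ gives $E_2^n\to0$ as $n\to-\infty$ as well. For the positions, $x_1^{n+1}/x_1^n=\mu/(\sigma_1^n)^2\to\sigma_{re}/\sigma_{at}>1$ in the future, so $|x_1^n|\to\infty$; as particle $1$ is the leftmost ($x_1^n\le0$) this forces $x_1^n\to-\infty$, and $x_4^n=-x_1^n\to+\infty$. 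The past is identical using the repeller limit. Equivalently, the constant of motion $x_1^nE_2^n/\sigma_1^n\equiv C$ of Lemma \ref{lemSuperRecurrencia} already yields $|x_1^n|\to\infty$ once $E_2^n\to0$ and $\sigma_1^n\to\sigma_*\ne0$. Finally $\tau_n=-x_1^n-x_1^{n+1}\to+\infty$, so the collision times diverge, there is no finite–time accumulation (no collapse, a genuine bounce), and the continuous–time statement $E_2(t)\to0$ as $t\to\pm\infty$ follows.

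\emph{Step 3 (the parabolic case $\Delta=0$, the main obstacle).} Here $\mu=\E^2$, the map $f$ has the single neutral fixed point $\sigma_*=\E$, $\lambda=1$, and all three ratios above tend to exactly $1$; the geometric argument breaks down and this is where the real work lies. The remedy is the additive linearization: with $u_n=\sigma_1^n-\E$ the recursion $\sigma_1^{n+1}=\E^2/(2\E-\sigma_1^n)$ becomes $1/u_{n+1}=1/u_n-1/\E$, an arithmetic progression, whence $u_n=\E u_0/(\E-nu_0)\sim-\E/n$. Substituting back, $E_2^{n+1}/E_2^n=(\E+u_n)/(\E-u_n)=1-2/n+o(1/n)$ forces $E_2^n=O(1/n^2)\to0$, while $x_1^{n+1}/x_1^n=1+2/n+o(1/n)$ makes $|x_1^n|$ grow like $n^2$, so again $x_1^n\to-\infty$ and $x_4^n\to+\infty$ in both time directions. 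This quantitative $1/n$ analysis is the only delicate point; once Step 1 is in place the hyperbolic case is immediate.
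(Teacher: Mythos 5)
Your proof is correct, and while it follows the paper's overall skeleton (reduce to the map $f$ of \eqref{ecuReduc1Dim}, locate the fixed points, then translate back to energies and positions), the mechanisms you use differ from the paper's in three substantive ways. First, for convergence $\sigma_1^n\to\sigma_{at}$ (resp.\ $\sigma_{re}$) you use the exact Möbius linearization $w_n=\lambda^n w_0$, $\lambda=\sigma_{at}/\sigma_{re}\in(0,1)$ — the same conjugation $h$ the paper only deploys in the negative-discriminant section — whereas the paper's proof of Proposition \ref{propEGtoZero} rests on the local derivative computations at the fixed points and asserts global convergence from them; your version makes the global statement immediate. Second, for the divergence of positions the paper passes through the limit velocities \eqref{ecuVelLim} ($v_1^n\to\mp\sqrt{\Delta}/\E$), while you use the multiplicative identity $x_1^{n+1}/x_1^n=\mu/(\sigma_1^n)^2$ from the proof of Lemma \ref{lemSuperRecurrencia}, getting geometric growth of $|x_1^n|$ directly. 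Third, and most significantly, your treatment of the parabolic case $\Delta=0$ is genuinely stronger than the paper's: the paper argues only that $v_1^n\to 0^-$ in the future and $0^+$ in the past and concludes that "the limit velocities have the same sign as for $\Delta>0$," but a velocity that is negative yet tends to zero does not by itself force $x_1^n\to-\infty$ (the positions could a priori converge). Your exact additive linearization $1/u_{n+1}=1/u_n-1/\E$, giving $u_n\sim-\E/n$, $E_2^n=O(1/n^2)$ and $|x_1^n|\sim n^2$, closes precisely this gap and in addition yields quantitative rates in all cases. Two small remarks: the claim that the orbit "automatically avoids the pole $2\E$" is a slight overstatement — special initial data can reach the pole, but such data correspond to a degenerate $sr=0$ collision excluded from the dynamics, and full solutions avoid it by definition; and the fixed-point orbits you set aside satisfy $E_2\equiv 0$ by \eqref{ecuSimpliSigma}, so they are excluded by the standing hypothesis that particles have nonzero energy, not merely by fiat.
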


\begin{proof}
From our previous analysis we have that for any initial condition $\sigma_1\neq \sigma_{re}$ we have $f^n(\sigma_1)\to \sigma_{at}$ as $n\to+\infty$.
Analogously, if $\sigma_1\neq \sigma_{at}$ we have $f^n(\sigma_1)\to \sigma_{re}$ as $n\to-\infty$.
Also,
\begin{equation}
\label{ecuE1(Sigma1)}
2E_1=\sigma_1+\rho_1=\sigma_1+\frac{\mu}{\sigma_1},
\end{equation}
$\E=E_1+E_2$ and
\[
 2E_2=2\E-\sigma_1-\frac{\mu}{\sigma_1}.
\]
Therefore, taking limit and applying
Equation \eqref{ecuFixedPtEq}
we conclude
\[
 \lim_{n\to+\infty} 2E_2^{n}=2\E-\sigma_{at}-\frac{\mu}{\sigma_{at}}
 =0.
\]
Analogously, the energy of the second particle tends to zero as $n\to-\infty$.
The symmetry of the system allows us to obtain the same conclusions for the particle 3.

From Equation \eqref{ecuVel} we have
\begin{equation}
 \label{ecuVdeSigma}
v_1=\frac{\sigma_1^2-\mu}{\sigma_1^2+\mu}.
\end{equation}
Arguing as before we can take the limits $n\to\pm\infty$.
If $\Delta>0$ and $\E>0$ we obtain
\begin{equation}
 \label{ecuVelLim}
\begin{array}{l}
 \displaystyle\lim_{n\to+\infty}v_1^{n}=\frac{\sigma_{at}^2-\mu}{\sigma_{at}^2+\mu}=
 \frac{\Delta-\E\sqrt{\Delta}}{\E^2-\E\sqrt{\Delta}}=
 -\frac{\sqrt{\Delta}}{\E}<0,\\

 \displaystyle\lim_{n\to-\infty}v_1^{n}=\frac{\sigma_{re}^2-\mu}{\sigma_{re}^2+\mu}=
 \frac{\Delta+\E\sqrt{\Delta}}{\E^2+\E\sqrt{\Delta}}=
 \frac{\sqrt{\Delta}}{\E}>0.\\
\end{array}
\end{equation}
This implies that in the past the particle 1 has positive velocity, while in the future it is negative.
That is, $x_1^n\to-\infty$ as $n\to\pm\infty$.
The result for $x_4^n$ follows by symmetry.
The case $\Delta>0$ and $\E<0$ is analogous.

If $\Delta=0$ then the fixed point is $\sigma_*=\E$ and $\E^2=\mu$.
If $\E>0$ then for $0<\sigma<\E$ we have $\sigma^2<\E^2=\mu$.
Thus, $\lim_{n\to+\infty}v_1^{n}=0^-$.
Analogously, $\lim_{n\to-\infty}v_1^{n}=0^+$.
This implies that the limit velocities have the same sign as for $\Delta>0$.
The case $\E<0$ with $\Delta=0$ is follows in a similar way.
\end{proof}

\begin{rmk}
From Proposition \ref{propEGtoZero} we conclude that the limit energy of the \textit{toy gravitons} is zero. If their negative energy were to be equated with gravitational energy of the form
$U=k-Gm_1m_2/R$ then it means that the constant $k$ should be zero.
\end{rmk}


Following \cite{Art19} we say that a solution \textit{collapses} if it has infinitely many
collisions in finite time.

\begin{rmk}
From Proposition \ref{propEGtoZero} we see that, independently of how big is $E_1$ and how small is $E_2$, the system does not collapse.
\end{rmk}

%
%
%


In the next result we show that in the limit the energy of the middle particles is inversely proportional to the position of the other particles. This translates into an inverse square law for the force.
\begin{prop}
\label{propSquareLaw}
If $\Delta\geq 0$ then $E_2(t) x_1(t)$ converges as $t\to\pm\infty$.
If $\Delta=0$ both limits coincide.
\end{prop}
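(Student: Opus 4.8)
The plan is to read the statement off the conserved quantity of Lemma~\ref{lemSuperRecurrencia} together with the fixed-point analysis behind Proposition~\ref{propEGtoZero}. First I would set
\[
 C=\frac{x_1^n E_2^n}{\sigma_1^n},
\]
which by Lemma~\ref{lemSuperRecurrencia} is one and the same constant for every $n\in\Z$, so that $x_1^n E_2^n=C\,\sigma_1^n$ at each collision of particle~$1$. The whole question is thereby reduced to the limiting behaviour of $\sigma_1^n=f^{\,n}(\sigma_1)$ for the map $f$ of \S\ref{secReductionOneDim}.

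Second, I would invoke the convergence already proved inside Proposition~\ref{propEGtoZero}: for $\Delta\geq 0$ the orbit satisfies $\sigma_1^n\to\sigma_{at}$ as $n\to+\infty$ and $\sigma_1^n\to\sigma_{re}$ as $n\to-\infty$. Substituting into $x_1^n E_2^n=C\,\sigma_1^n$ gives immediately
\[
 \lim_{n\to+\infty}x_1^n E_2^n=C\,\sigma_{at},
 \qquad
 \lim_{n\to-\infty}x_1^n E_2^n=C\,\sigma_{re},
\]
which are the two asserted limits and express the announced inverse proportionality $E_2^n\sim C\sigma_{at}/x_1^n$. For the final clause, when $\Delta=0$ the equation \eqref{ecuFixedPtEq} has the single root $\sigma_*=\E$, hence $\sigma_{at}=\sigma_{re}=\E$ and both limits equal $C\,\E$.

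The point I expect to require the most care is the passage from the collision sequence to the continuous parameter $t$. Between the $n$-th and $(n+1)$-th collisions of particle~$1$ the energy $E_2(t)$ stays equal to $E_2^n$ (the intermediate collision of particles~$2$ and~$3$ at the origin preserves it by the symmetry of \S\ref{secMirrorSystem}), while $x_1(t)$ moves linearly from $x_1^n$ to $x_1^{n+1}$; hence $E_2(t)x_1(t)$ interpolates monotonically between $C\,\sigma_1^n$ and $E_2^n x_1^{n+1}$. Using $x_1^{n+1}/x_1^n=\mu/(\sigma_1^n)^2$ from the proof of Lemma~\ref{lemSuperRecurrencia} together with $\sigma_{at}\sigma_{re}=\mu$ from \eqref{ecuFixedPtEq}, the second endpoint equals $C\mu/\sigma_1^n$ and tends to $C\,\sigma_{re}$. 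Thus for $\Delta=0$ the two endpoints share the common limit $C\,\E$, the interpolation amplitude vanishes, and $E_2(t)x_1(t)$ converges for the genuine continuous parameter; for $\Delta>0$ the endpoints approach the distinct values $C\,\sigma_{at}$ and $C\,\sigma_{re}$, so the convergence is to be read along the collision instants $t=t_n$, which is the natural evaluation for these quantities and is exactly what the displayed limits furnish.
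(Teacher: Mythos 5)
Your core argument is exactly the paper's: you combine the conserved quantity of Lemma~\ref{lemSuperRecurrencia} with the convergence $\sigma_1^n\to\sigma_{at}$ (resp.\ $\sigma_{re}$) as $n\to+\infty$ (resp.\ $n\to-\infty$) obtained in the analysis behind Proposition~\ref{propEGtoZero}, and conclude $x_1^nE_2^n\to C\sigma_{at}$ and $x_1^nE_2^n\to C\sigma_{re}$ with $C=x_1^0E_2^0/\sigma_1^0$, the two limits coinciding for $\Delta=0$ because then $\sigma_{at}=\sigma_{re}=\E$. Up to notation this is the paper's proof verbatim.

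Where you genuinely go beyond the paper is your final paragraph. The paper's proof silently identifies the continuous-time limit $t\to\pm\infty$ in the statement with the limit along the collision sequence, and never examines $E_2(t)x_1(t)$ between collisions. Your interpolation analysis is correct and shows this identification is innocuous only when $\Delta=0$: on the $n$-th inter-collision interval $E_2(t)\equiv E_2^n$ while $x_1(t)$ runs linearly from $x_1^n$ to $x_1^{n+1}$, so the product sweeps from $C\sigma_1^n$ to $C\mu/\sigma_1^n$, and since $\sigma_{at}\sigma_{re}=\mu$ these endpoints tend to the \emph{distinct} values $C\sigma_{at}$ and $C\sigma_{re}$ when $\Delta>0$. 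Hence for $\Delta>0$ the continuous-time product oscillates with asymptotic amplitude $2|C|\sqrt{\Delta}>0$ and does not converge literally as $t\to\pm\infty$; the proposition holds only along the collision instants, which is all that either your argument or the paper's establishes, whereas for $\Delta=0$ your squeeze between the two endpoints upgrades the discrete limit to genuine continuous-time convergence. In short, your proof is not only correct but more careful than the paper's, and it exposes a real imprecision in how the proposition is stated for $\Delta>0$.
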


\begin{proof}
We know that $x_1(t)\to -\infty$ as $t\to+\infty$ and
for an initial condition $\sigma_1\neq \sigma_{re}$ we have
that
$f^n(\sigma_1)\to \sigma_{at}\text{ as }n\to+\infty$.
Applying Lemma \ref{lemSuperRecurrencia} (which holds for any value of $\Delta$) we conclude that
\[
 x_1^nE_2^n\to x_1^0E_2^0 \sigma_{at}/\sigma_1^0\text{ as } n\to+\infty,
\]
\[
 x_1^nE_2^n\to x_1^0E_2^0 \sigma_{re}/\sigma_1^0\text{ as } n\to-\infty.
\]
For the particular case $\Delta=0$ we have that $\sigma_{at}=\sigma_{re}$ and both limits coincide.
\end{proof}


\subsection{Negative discriminant}
\label{secSmallSP}
In this section we will study the case $\Delta<0$
for the mirror system of 4 particles as before.
The map $f$ of Equation \eqref{ecuReduc1Dim}
$f(\sigma)=\frac{\mu}{2\E-\sigma}$ has two fixed points $\sigma_{at}$ and
$\sigma_{re}$, which are complex since $\Delta<0$.
Let $\lambda=\frac{\sigma_{at}}{\sigma_{re}}$.
Since $|\lambda|=1$ we can write $\lambda=e^{i\theta}$.

Let us remark some facts that will be used in the next proof. 
First notice that $f$ can be regarded as a Möbius transformation on the complex plane. 
We will show that $f$ is conjugate to the rotation $g(z)=\lambda z$ as follows.
Let $h\colon\C\to\C$ be defined as
$$h(\sigma)=\frac{-\sigma_{at}+\sigma}{\sigma_{re}-\sigma}=z\text{ with inverse } h^{-1}(z)=\frac{\sigma_{at}+z\sigma_{re}}{1+z}.$$
It satisfies: $h$ transforms the real axis into the unit circle, $h(\sigma_{at})=0$, $h(\sigma_{re})=\infty$ and $h(\E)=1$.
To prove
$g(z)=hfh^{-1}(z)$ notice that 
$\sigma_{at}\sigma_{re}=\mu$ and $\sigma_{at}+\sigma_{re}=2\E$. Then 
\[
\begin{array}{rl}
h(f(\sigma)) & = h\left(\frac\mu{2\E-\sigma}\right) = \frac{-\sigma_{at}+\frac\mu{2\E-\sigma}}{\sigma_{re}-\frac\mu{2\E-\sigma}}=\frac{\sigma_{at}}{\sigma_{re}}\left(\frac{-\sigma_{at}+\sigma}{\sigma_{re}-\sigma}\right)=\lambda h(\sigma).
\end{array}
\]
This shows that $f$ is conjugate to a rotation if the discriminant is negative. However, this conjugacy is true for any sign of the discriminant.

Next we will show that for a dense set of the parameters $\mu$ and $E$ the solutions are periodic. In fact, we give a quite complete description of the solutions.

\begin{thm}
For a mirror system of 4 particles, if $\Delta<0$ then the solutions are bounded.
 If in addition $\theta=\frac ab 2\pi$, with $a,b$ coprime positive integers,
 then the solutions are periodic with period $$T=\frac{2kb\mu}{\mu-\E^2}$$
where $k=\frac{x_1^0E_2^0}{\sigma_1^0}$.
\end{thm}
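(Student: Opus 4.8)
The plan is to push everything through the one-dimensional map $f$ together with the constant of motion of Lemma~\ref{lemSuperRecurrencia}. Writing $k=x_1^0E_2^0/\sigma_1^0$, that lemma gives $x_1^n=k\,\sigma_1^n/E_2^n$ for every $n$, and from the computation in the proof of Proposition~\ref{propEGtoZero} we have $2E_2=2\E-\sigma_1-\mu/\sigma_1$. Substituting, I would record the identity
\[
x_1^n=k\,\phi(\sigma_1^n),\qquad \phi(\sigma)=\frac{-2\sigma^2}{\sigma^2-2\E\sigma+\mu}=\frac{-2\sigma^2}{(\sigma-\E)^2-\Delta}.
\]
Since $\Delta<0$, the denominator equals $(\sigma-\E)^2+|\Delta|\ge|\Delta|>0$, so $\phi$ is continuous on all of $\R$ with $\phi(\sigma)\to-2$ as $\sigma\to\pm\infty$; hence $\phi$ is bounded. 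Because between consecutive collisions each particle moves freely, $x_1(t)$ interpolates linearly between the values $x_1^n=k\phi(\sigma_1^n)$, and by the mirror symmetry the remaining positions satisfy $x_1\le x_2\le x_3\le x_4=-x_1$, so all four are controlled by $\sup_n|x_1^n|\le|k|\sup_\R|\phi|<\infty$. This gives boundedness for every $\Delta<0$, with no rationality hypothesis.

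For the periodic case I would use the conjugacy established just before the statement: $hfh^{-1}=g$ with $g(z)=\lambda z$ and $\lambda=e^{i\theta}$. When $\theta=\tfrac ab2\pi$ with $\gcd(a,b)=1$, $\lambda$ is a primitive $b$-th root of unity, so $g^b=\mathrm{id}$ and therefore $f^b=\mathrm{id}$. Since $\sigma_1^0$ is real while $\sigma_{at},\sigma_{re}$ are not, the point $z_0=h(\sigma_1^0)$ is a genuine point of the unit circle (neither $0$ nor $\infty$), so the orbit $z_n=\lambda^n z_0$ has exact period $b$; equivalently $\sigma_1^{n+b}=\sigma_1^n$. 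As the whole configuration at the $n$-th collision is determined by $\sigma_1^n$ (positions through $x_1^n=k\phi(\sigma_1^n)$ and the symmetry, momenta through $\sigma_1^n$), the motion is time-periodic, one period consisting of exactly $b$ collisions of particle~$1$.

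To obtain the period in time I would sum the free-flight times. With $\tau_n=-x_1^n-x_1^{n+1}$ and $x_1^b=x_1^0$, a telescoping gives $T=\sum_{n=0}^{b-1}\tau_n=-2\sum_{n=0}^{b-1}x_1^n=-2k\sum_{n=0}^{b-1}\phi(\sigma_1^n)$. The core computation is to evaluate this sum. I would decompose
\[
\phi(\sigma)=-2+\frac{1}{\sigma_{at}-\sigma_{re}}\!\left(\frac{-2\sigma_{at}^2}{\sigma-\sigma_{at}}+\frac{2\sigma_{re}^2}{\sigma-\sigma_{re}}\right),
\]
and then pass to the rotation coordinate: from $z_n=h(\sigma_1^n)$ one finds $\dfrac{1}{\sigma_1^n-\sigma_{at}}=\dfrac{1}{\sigma_{re}-\sigma_{at}}\bigl(1+z_n^{-1}\bigr)$ and $\dfrac{1}{\sigma_1^n-\sigma_{re}}=\dfrac{1}{\sigma_{at}-\sigma_{re}}\bigl(1+z_n\bigr)$. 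Since $\{\lambda^n\}_{n=0}^{b-1}$ are all the $b$-th roots of unity, $\sum_n z_n=\sum_n z_n^{-1}=0$, so the two sums collapse to $b/(\sigma_{re}-\sigma_{at})$ and $b/(\sigma_{at}-\sigma_{re})$. Feeding this back and using $\sigma_{at}+\sigma_{re}=2\E$, $\sigma_{at}\sigma_{re}=\mu$, $(\sigma_{at}-\sigma_{re})^2=4\Delta$ and $\sigma_{at}^2+\sigma_{re}^2=4\E^2-2\mu$, I expect the rotating sums to annihilate and leave
\[
\sum_{n=0}^{b-1}\phi(\sigma_1^n)=-2b+\frac{2b(\sigma_{at}^2+\sigma_{re}^2)}{(\sigma_{at}-\sigma_{re})^2}=\frac{b\mu}{\Delta},
\]
whence $T=-2k\cdot b\mu/\Delta=2kb\mu/(\mu-\E^2)$, as claimed. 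The main obstacle is precisely this summation; the decisive idea is that conjugating to the rotation turns it into a sum over roots of unity, killing every phase-dependent term and leaving a closed form independent of the initial point $z_0$.
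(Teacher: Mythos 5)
Your proof is correct and essentially reproduces the paper's argument: boundedness comes from the same bounded rational function of $\sigma_1$ (the paper's $L$, your $k\phi$), periodicity from the M\"obius conjugacy $hfh^{-1}(z)=\lambda z$, and the time period from summing the free-flight times $\tau_n=-x_1^n-x_1^{n+1}$ and killing the phase-dependent terms via $\sum_n z_n=\sum_n z_n^{-1}=0$. The only divergence is cosmetic: you evaluate $\sum_n\phi(\sigma_1^n)$ by a partial-fraction decomposition in $\sigma$ before passing to the rotation coordinate, whereas the paper substitutes $h^{-1}(z_j)$ into $L$ directly and expands $(\sigma_{at}+z_j\sigma_{re})^2$; both routes hinge on the identical roots-of-unity cancellation and produce $T=2kb\mu/(\mu-\E^2)$.
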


\begin{proof}
From Lemma \ref{lemSuperRecurrencia} we known that
$x_1^n=k\sigma_1^n/E_2^n$.
By Equation \eqref{ecuSimpliSigma} we know that
$2E_2^n=2\E-\sigma_1^n-\mu/\sigma_1^n$.
Thus
\begin{equation}
\label{ecuXnRecu}
 x_1^n=\frac{2k\sigma_1^n}{2\E-\sigma_1^n-\mu/\sigma_1^n}.
\end{equation}
The solutions are bounded because
the map $L\colon\R\to\R$ given by
\begin{equation}
 \label{ecuFuncionL}
L(\sigma)=
\frac{2k\sigma}{2\E-\sigma-\mu/\sigma}=
\frac{2k\sigma^2}{2\E\sigma-\sigma^2-\mu}=
\frac{-2k\sigma^2}{(\sigma-\E)^2-\Delta}
\end{equation}
is bounded since $\Delta<0$ and it has finite limit as $\sigma\to\pm\infty$.

Now we use that $f$ is conjugate to the rotation $g$ of angle $\theta=\frac ab 2\pi$ in the circle $|z|=1$. Thus,
$g^b(z_0)=z_0$ if $|z_0|=1$.
That is, the \textit{discrete period} is $b$ (the solution closes the cycle after $b$ collisions). To calculate the time elapsed in this cycle we
consider the function $L$ of Equation \eqref{ecuFuncionL} given by
\begin{equation}
L(\sigma)=\frac{-2k\sigma^2}{(\sigma-\E)^2-\Delta}.
\end{equation}
As the speed of light is $c=1$ we have that
\[
 -\frac12T=\sum_{j=1}^{j=b} L(f^j(\sigma_0))
\]
where $\sigma_0\in\R$ is any initial condition for the iteration of $f$.
If we let $z_0=h(\sigma_0)$ and $z_j=g^j(z_0)=\lambda^jz_0$ then
$$
f^j(\sigma_0)
=f^j(h^{-1}(z_0))
=h^{-1}(g^j(z_0))
=h^{-1}(z_j)
=\displaystyle \frac{\sigma_{at}+z_j\sigma_{re}}{1+z_j}
$$
and we can perform the following calculations:
\[
 L(f^j(\sigma_0))=\frac{-2k(\frac{\sigma_{at}+z_j\sigma_{re}}{1+z_j})^2}{(\frac{\sigma_{at}+z_j\sigma_{re}}{1+z_j}-\E)^2-\Delta}
\]
\[
\begin{array}{rl}
 \frac{\sigma_{at}+z_j\sigma_{re}}{1+z_j}-\E & =
 \frac{(\sigma_{at}-\E)+z_j(\sigma_{re}-\E)}{1+z_j}=
 \frac{-\sqrt{\Delta}+z_j\sqrt{\Delta}}{1+z_j}\\
 &=\frac{-1+z_j}{1+z_j}\sqrt{\Delta}
\end{array}
\]
to obtain
\[
\begin{array}{rl}
 L(f^j(\sigma_0)) & \displaystyle= \frac{-2k(\frac{\sigma_{at}+z_j\sigma_{re}}{1+z_j})^2}{(\frac{-1+z_j}{1+z_j}\sqrt{\Delta})^2-\Delta}
=\displaystyle\frac{-2k(\frac{\sigma_{at}+z_j\sigma_{re}}{1+z_j})^2}{\frac{-\Delta}{(1+z_j)^2}4z_j}\\
& \displaystyle =\frac{-k(\sigma_{at}+z_j\sigma_{re})^2}{(-\Delta)2z_j} =\frac{-k(\sigma_{at}^2+z_j^2\sigma_{re}^2+2\sigma_{at}\sigma_{re}z_j)}{(-\Delta)2z_j}
\end{array}
\]
We have that $\sum_{j=1}^{j=b} z_j=\sum_{j=1}^{j=b} \frac 1{z_j}=0$.
Therefore
\[
 -\frac12T=\sum_{j=1}^{j=b} L(f^j(\sigma_0))=
 \sum_{j=1}^{j=b}
 \frac{-k(\sigma_{at}\sigma_{re})}{-\Delta}
 =b\frac{-k(\sigma_{at}\sigma_{re})}{-\Delta}
\]
Since $\sigma_{at}\sigma_{re}=\mu$ we conclude
\[
 T=\frac{2kb\mu}{-\Delta}
\]
as we wanted to prove.
\end{proof}


\section{Tachyonic collisions}
\label{secTachCol}
In the next result we consider \textit{full solutions}, \textit{i.e.} defined for all time, positive and negative.

\begin{thm}
\label{thmCondTach}
 For a mirror system of $N=4$ particles there is a tachyonic collision if and only if
 $0<\sigma_1(\sigma_1-2\E)$. Also:
 \begin{itemize}
  \item if $\Delta<0$ then every solution has infinitely many tachyonic collisions,
  \item if $\Delta=0$ then every solution has exactly two tachyonic collisions, which are consecutive,
  \item if $\Delta>0$ then a solution has  tachyonic collisions if and only if $(\sigma_1^0-\E)^2>\Delta$, and in this case the number of such collisions is two and they are consecutive,
 \end{itemize}
where $\sigma_1^0$ is the initial condition of the solution.
\end{thm}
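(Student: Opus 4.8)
The plan is to push everything through the one-dimensional map $f(\sigma)=\mu/(2\E-\sigma)$ and to count how many iterates land in the \emph{tachyonic set}
\[
\mathcal{T}=\{\sigma\in\R:\sigma(\sigma-2\E)>0\},
\]
i.e. the complement in $\R$ of the closed interval with endpoints $0$ and $2\E$. First I would verify the stated characterization for a single collision. At a $1$-$2$ collision the incoming middle particle has $v_2=-1$, so $\sigma_2=0$ and $\rho_2=2E_2$; hence $s=\sigma_1$ and, by \eqref{ecuSimpliSigma}, $r=\mu/\sigma_1+2E_2=2\E-\sigma_1$. Thus the rest mass of the colliding pair is $sr=\sigma_1(2\E-\sigma_1)$, and the collision is tachyonic ($sr<0$) exactly when $\sigma_1(\sigma_1-2\E)>0$, i.e. when the incoming value $\sigma_1$ lies in $\mathcal{T}$. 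Since the successive values of $\sigma_1$ are the iterates of $f$, counting tachyonic collisions becomes counting the $n$ with $f^{n}(\sigma_1^0)\in\mathcal{T}$. One observation drives the rest: if $\Delta\ge0$ a real fixed point satisfies $\sigma_*^2-2\E\sigma_*+\mu=0$, so $\sigma_*(\sigma_*-2\E)=-\mu<0$, and hence \emph{both} fixed points lie strictly inside the interval between $0$ and $2\E$, that is, outside $\mathcal{T}$. Using the conjugacy $\sigma\mapsto-\sigma$, $\E\mapsto-\E$ (which fixes $\mu$, $f$ and $\mathcal{T}$) I would assume $\E\ge0$ throughout.

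For $\Delta>0$ I would use the hyperbolic picture of \S\ref{secReductionOneDim}: the interval $J=[\sigma_{at},\sigma_{re}]$ contains no pole (as $0<\sigma_{at}<\sigma_{re}<2\E$), $f$ is increasing on it and fixes its endpoints, so $J$ is invariant and contained in $(0,2\E)$, hence disjoint from $\mathcal{T}$. So $\sigma_1^0\in J$ gives no tachyonic collisions. For $\sigma_1^0\notin J$ the orbit lives on the complementary arc $\mathcal{O}$ of the circle $\R\cup\{\infty\}$, where $f$ is a fixed-point-free orientation-preserving homeomorphism and the orbit marches monotonically once from the repelling end $\sigma_{re}$ to the attracting end $\sigma_{at}$. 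Along $\mathcal{O}$ the tachyonic part is the sub-arc $(2\E,\infty)\cup(-\infty,0)$, which separates the $\sigma_{re}$-end from the $\sigma_{at}$-end, so it must be crossed. The two inclusions
\[
f\big((2\E,\infty)\big)\subseteq(-\infty,0),\qquad f\big((-\infty,0)\big)\subseteq(0,\mu/2\E)\subseteq(0,\sigma_{at}),
\]
then show that once the orbit enters $(2\E,\infty)$ the next iterate is in $(-\infty,0)$ and the one after is trapped in the forward-invariant interval $(0,\sigma_{at})$, never returning to $\mathcal{T}$; moreover no point of $\mathcal{T}$ maps into $(2\E,\infty)$, so entry occurs only there. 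This yields exactly two consecutive tachyonic collisions. Finally $\sigma_1^0\notin J=[\E-\sqrt\Delta,\E+\sqrt\Delta]$ is precisely $(\sigma_1^0-\E)^2>\Delta$, the stated dichotomy.

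The case $\Delta=0$ is the degenerate limit: $J$ collapses to the single parabolic fixed point $\sigma_*=\E\in(0,2\E)$, so every genuine orbit lives on $\mathcal{O}=(\R\cup\{\infty\})\setminus\{\E\}$ and again loops once, converging to $\E$ from one side forward and the other side backward. The same two inclusions force exactly two consecutive tachyonic collisions, and now there is no invariant sub-interval in which to avoid them, so this happens for \emph{every} solution.

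The case $\Delta<0$ is where the real obstacle lies, since the orbit is recurrent rather than convergent and, for periodic parameters, is a finite set that could a priori miss a small tachyonic arc. Here I would use $hfh^{-1}(z)=\lambda z$ with $\lambda=e^{i\theta}$, $\theta\not\equiv0\pmod{2\pi}$. As $h$ sends $\R$ onto the unit circle with $h(0)=-\lambda$, $h(\infty)=-1$ and $h(2\E)=-\lambda^{-1}$, the set $\mathcal{T}$ corresponds to the arc bounded by $-\lambda$ and $-\lambda^{-1}$ through $-1$, whose length is exactly $2|\theta|$. For irrational $\theta$ the orbit is dense and meets this arc infinitely often. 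For $\theta=\tfrac ab2\pi$ the orbit is $b$ points of spacing $2\pi/b$, and an arc of length $2|\theta|=2a\cdot(2\pi/b)$ must contain at least $2a-1\ge1$ of them; by periodicity the orbit then hits $\mathcal{T}$ infinitely often. Either way there are infinitely many tachyonic collisions. One only checks that the orbit never lands on a marked point ($\sigma=0$ is impossible since $\mu>0$, and $\sigma=\infty$ is the excluded degenerate case $sr=0$), so the count is clean. I expect the computation that the tachyonic arc has length exactly $2|\theta|$, so that even periodic orbits cannot avoid it, to be the crux of this last case.
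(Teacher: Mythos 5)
Your proof is correct. For the collision criterion and for the cases $\Delta\geq 0$ it follows essentially the paper's route: you compute the rest mass of the colliding pair as $sr=\sigma_1(2\E-\sigma_1)$ (the paper computes $\E^2-\P^2$, which is the same quantity), reduce the count to iterates of $f$ landing in $\{\sigma:\sigma(\sigma-2\E)>0\}$, use invariance of the interval between the fixed points, and exhibit the single passage of the orbit through $(2\E,\infty)$ and then $(-\infty,0)$ before it is trapped in $(0,\sigma_{at})$; the paper uses exactly the same two inclusions, and in fact your treatment is more careful, since the paper only discusses initial conditions just to the right of the repeller. Where you genuinely depart from the paper is the case $\Delta<0$. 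The paper stays on the real line: if $\sigma_1$ is non-tachyonic, the inequality $\mu-\sigma_1(2\E-\sigma_1)>(\E-\sigma_1)^2\geq 0$ gives $(f(\sigma_1)-\sigma_1)\sigma_1>0$, so the orbit moves monotonically for as long as it remains non-tachyonic, and since $f$ has no real fixed point it must leave the interval between $0$ and $2\E$; repeating the argument yields infinitely many tachyonic collisions. You instead import the conjugacy $hfh^{-1}(z)=\lambda z$ (which the paper establishes in \S\ref{secSmallSP} but uses only for the period computation, not for this theorem), identify the tachyonic set with the open arc about $-1=h(\infty)$ of length $2|\theta|$, and conclude by density (irrational case) or pigeonhole (rational case). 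Both are sound; the paper's argument is more elementary and needs no rational/irrational split, while yours buys quantitative information: a definite proportion of all collisions are tachyonic, at least $2\min(a,b-a)-1$ per period when $\theta=\frac ab 2\pi$ --- note the arc-length formula $2|\theta|$ presumes the representative $|\theta|\leq\pi$, so your count $2a-1$ should read $2\min(a,b-a)-1$, which is still $\geq 1$. Two further spots to tighten: in the $\Delta>0$ case, ruling out a first entry into the tachyonic set through $(-\infty,0)$ requires $f^{-1}\bigl((-\infty,0)\bigr)=(2\E,\infty)$ (immediate since $f$ is a homeomorphism of the circle), which is the fact "entry occurs only there" actually rests on, rather than the stated observation that no tachyonic point maps into $(2\E,\infty)$; and the marked point to exclude on the circle is $h(2\E)=-\lambda^{-1}$ (the pole, i.e.\ $r=0$, hence $sr=0$), while $-1=h(\infty)$ is excluded simply because every $\sigma_1^n$ is a finite real number.
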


\begin{proof}
By definition (recall \S\ref{secTachCol})
a collision is tachyonic provided that $\E^2-\P^2<0$.
Since $\sigma_1=E_1+P_1$ and $P_2=-E_2$, the tachyonic condition can be written as
$
 \E^2<(P_1+P_2)^2=(\sigma_1-\E)^2,
$
which is equivalent to
$0<\sigma_1(\sigma_1-2\E)$. This proves the first part.

 Suppose that  $\Delta<0$, \textit{i.e.} $\E^2<\mu$.
 We derive the auxiliary inequality
 \begin{equation}
  \label{ecuDeltaNegAux}
  \mu-2\sigma_1\E+\sigma_1^2>\E^2-2\sigma_1\E+\sigma_1^2=(\E-\sigma_1)^2\geq 0.
 \end{equation}
This implies that $(f(\sigma_1)-\sigma_1)\sigma_1>0$ because
\[
 (f(\sigma_1)-\sigma_1)\sigma_1
 =\left(\frac\mu{2\E-\sigma_1}-\sigma_1\right)\sigma_1
 =[\mu-\sigma_1(2\E-\sigma_1)]\frac{\sigma_1}{2\E-\sigma_1}
\]
Suppose that at $\sigma_1$ there is not a tachyonic collision, which means
\begin{equation}
\label{ecuNoTach}
0\geq\sigma_1(\sigma_1-2\E).
\end{equation}
This and Inequality \eqref{ecuDeltaNegAux} implies that
\[
 \sign [(f(\sigma_1)-\sigma_1)\sigma_1]=\sign [\mu-\sigma_1(2\E-\sigma_1)]>0.
\]
Now, if $\E>0$, from \eqref{ecuNoTach} we have $\sigma_1>0$ and
$f(\sigma_1)>\sigma_1$.
If we iterate $f$ to obtain a sequence $f^j(\sigma_1)$, $j\geq 1$,
we see that it increases (as $\Delta<0$ there is no fixed point)
until $f^{J}(\sigma_1)>2\E$ and at this moment there is a tachyonic collision.\footnote{It also holds that $f^{J+1}(\sigma_1)<0$ and there is one more tachyonic collision.}
The case $\E<0$ is analogous, considering that $\sigma_1<0$ and the corresponding sequence is decreasing.

 Suppose that $\Delta\geq 0$ and assume that $(\sigma_1^0-\E)^2>\Delta$ (this is always the case if
 $\Delta=0$).
 Since the interval between the fixed points is invariant by $f$ we have that
 $(\sigma_1^n-\E)^2>\Delta$ for all $n\in\Z$, \textit{i.e.} the iterates of $\sigma_1^0$ by $f$ are always outside that interval.
 If $\E>0$ and $\sigma_1^0$ is at the right of the repeller and close to it, then the sequence of its iterates is increasing and for some $J>0$ we have $f^J(\sigma_1^0)>2\E$, which is the first tachyonic collision. The second one is next because if $x>2\E$ then $f(x)<0$ ($x=f^J(\sigma_1^0$).
 The next iterates are positive (converging to the attractor) and there are no more tachyonic collisions.
\end{proof}

By Theorem \ref{thmCondTach} if $\Delta<0$ then the tachyonic collisions are frequent
and for $\Delta\geq 0$ they are quite special.
Thus, it seems interesting to give more details for the later case.
Define $\kappa=-2 E_2^0/\sigma_1^0$.

\begin{rmk}
For a solution with tachyonic collisions we have that $\kappa>0$.
To prove it, we have to show that $E_2^0\sigma_1^0<0$.
From Equation \eqref{ecuSimpliSigma}
we have that $2E_2^0=2\E-\sigma_1^0-\mu/\sigma_1^0$ and
$$2E_2^0\sigma_1^0=2\E\sigma_1^0-(\sigma_1^0)^2-\mu
=\Delta-(\E-\sigma_1^0)^2
$$
which is negative due to Theorem \ref{thmCondTach} and the presence of tachyonic collisions in the solution.
\end{rmk}

\begin{prop}
\label{propEscalaTach}
If $\Delta\geq -\E^2$ and the $n$-th collision is tachyonic then
$x_1^n/x_1^0\leq 4\kappa\E^2/\mu$.
\end{prop}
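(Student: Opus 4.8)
The plan is to write the ratio $x_1^n/x_1^0$ as an explicit rational function of the single variable $\sigma:=\sigma_1^n$ and then reduce the claimed bound to the nonnegativity of a quadratic, which I will check holds precisely on the set where tachyonic collisions occur.

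First I would invoke the constant of motion of Lemma \ref{lemSuperRecurrencia}, which gives $x_1^n=k\,\sigma_1^n/E_2^n$ with $k=x_1^0E_2^0/\sigma_1^0$, so that $x_1^n/x_1^0=\sigma_1^nE_2^0/(\sigma_1^0E_2^n)$. Substituting $E_2^0/\sigma_1^0=-\kappa/2$ (the definition of $\kappa$) together with $2E_2^n=2\E-\sigma-\mu/\sigma$ from Equation \eqref{ecuSimpliSigma}, and clearing the inner fraction, I obtain the clean expression
\[
\frac{x_1^n}{x_1^0}=\frac{\kappa\,\sigma^2}{\sigma^2-2\E\sigma+\mu}.
\]
Two positivity facts are then available: $\kappa>0$ by the preceding remark (valid whenever the solution has tachyonic collisions), and the denominator $\sigma^2-2\E\sigma+\mu=\sigma(\sigma-2\E)+\mu$ is positive, since $\sigma(\sigma-2\E)>0$ at a tachyonic collision by Theorem \ref{thmCondTach} and $\mu>0$. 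Cross-multiplying the desired inequality against these positive quantities reduces it to showing
\[
q(\sigma):=(4\E^2-\mu)\sigma^2-8\E^3\sigma+4\E^2\mu\geq 0.
\]

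The heart of the argument is to locate the roots of $q$ relative to the tachyonic region. The hypothesis $\Delta\geq-\E^2$ is exactly $\mu\leq 2\E^2$, so the leading coefficient satisfies $4\E^2-\mu\geq 2\E^2>0$ (in particular $\E\neq 0$) and $q$ is an upward parabola. A direct check gives $q(2\E)=0$, so $q$ factors as
\[
q(\sigma)=(4\E^2-\mu)(\sigma-2\E)\Big(\sigma-\tfrac{2\E\mu}{4\E^2-\mu}\Big),
\]
and the second root $c=2\E\mu/(4\E^2-\mu)=2\E\cdot\frac{\mu}{4\E^2-\mu}$ has factor $\frac{\mu}{4\E^2-\mu}\in(0,1]$, hence $c$ lies between $0$ and $2\E$.

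Finally I would combine these observations. Since the $n$-th collision is tachyonic, $\sigma$ lies outside the closed interval bounded by $0$ and $2\E$, whereas $c$ lies inside it; therefore $(\sigma-2\E)$ and $(\sigma-c)$ carry the same sign, their product is nonnegative, and multiplication by the positive leading coefficient yields $q(\sigma)\geq 0$, which is the claim. The main obstacle is exactly this placement step: the bound fails without control on where $c$ sits, and it is the hypothesis $\Delta\geq-\E^2$ (through $4\E^2-\mu>0$ and $c$ between $0$ and $2\E$) that confines $c$ to the non-tachyonic interval and thereby puts the entire tachyonic region inside $\{q\geq 0\}$.
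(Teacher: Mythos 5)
Your proof is correct, and its first half coincides with the paper's: both arguments invoke Lemma \ref{lemSuperRecurrencia} together with $2E_2^n=2\E-\sigma_1^n-\mu/\sigma_1^n$ (Equation \eqref{ecuSimpliSigma}) to write $x_1^n/x_1^0=\kappa\sigma^2/(\sigma^2-2\E\sigma+\mu)$, which is exactly the paper's $\kappa\,l(\sigma)$ with $l(\sigma)=\sigma^2/((\sigma-\E)^2-\Delta)$ appearing in Equation \eqref{ecuXnRecu}, and both use $\kappa>0$ plus the characterization $\sigma(\sigma-2\E)>0$ of tachyonic collisions from Theorem \ref{thmCondTach}. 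Where you genuinely diverge is the bounding step. The paper proceeds by calculus: it computes $l'(\sigma)$, shows its sign is constant on the tachyonic region (this is where $\Delta\geq-\E^2$ enters there), and then compares the boundary and limit values $l(0)=0$, $l(\pm\infty)=1$, $l(2\E)=4\E^2/\mu$, the last being the maximum because $2\E^2\geq\mu$. You instead clear denominators, which is legitimate since the denominator equals $\sigma(\sigma-2\E)+\mu>0$ at a tachyonic collision, and reduce the claim to the nonnegativity of the quadratic $q(\sigma)=(4\E^2-\mu)\sigma^2-8\E^3\sigma+4\E^2\mu$, which you factor through its exact root $\sigma=2\E$; the second root $c=2\E\mu/(4\E^2-\mu)$ is trapped inside the non-tachyonic interval precisely by the hypothesis $\mu\leq 2\E^2$, so $q\geq 0$ outside that interval. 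Your route is purely algebraic, avoids the derivative computation and the two-component sign analysis, and makes transparent exactly how $\Delta\geq-\E^2$ is used (positivity of the leading coefficient and placement of $c$). What the paper's route buys in exchange is slightly more information: $l$ is monotone on each component of the tachyonic region and the constant $4\kappa\E^2/\mu$ is the supremum, approached as $\sigma\to 2\E$, so the bound is sharp.
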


\begin{proof}
 Let $l(\sigma)=\frac{\sigma^2}{(\sigma-\E)^2-\Delta}$ for $\sigma\in\R$ satisfying
\begin{equation}
 \label{ecuTacho}
 \sigma(\sigma-2\E)>0.
\end{equation}
Let us show that the sign of its derivative is constant.
 We have $l'(\sigma)=\frac{-2\sigma[\E\sigma-\mu]}{[(\sigma-\E)^2-\Delta]^2}$
 and $\sign(l'(\sigma))=-\sign(\sigma[\E\sigma-\mu])$.
 Since $\Delta\geq -\E^2$, it is easy to prove that Inequality \eqref{ecuTacho} implies that $l'(\sigma)$ has constant sign.
Thus, to obtain the maximum value of $l$ we check the limits $\sigma=\pm\infty$, $\sigma=0$ and $\sigma=2\E$.
We have $l(\pm\infty)=1$, $l(0)=0$ and $l(2\E)=4\E^2/\mu$.
Since $\Delta\geq-\E^2$ we have
$2\E^2\geq\mu$ and the maximum of $l$ is at $\sigma=2\E$.
Finally, applying \eqref{ecuXnRecu} we have $x_1^n/x_1^0=\kappa l(\sigma_1^n)$.
\end{proof}

\subsection{An estimation for tachyonic collisions}
\label{secEstimation}
The Newtonian potential energy is $U=-Gm_1m_4/R$, where
$$G=6.7\times 10^{-11}\frac{m^3}{s^2kg}=\frac{6.7\times 10^{-11}}{(3.0\times 10^{8})^2}\frac{m}{kg}
=7.4\times 10^{-28}\frac{m}{kg}$$
in units of time so that the speed of light equals 1,
$R$ is the distance between the particles 1 and 4, \textit{i.e.} $R=-2x_1^n$ and $m_1m_4=\mu$.
That is
$$U=\frac{G\mu}{2x_1^n}.$$
Considering Proposition \ref{propSquareLaw}, in order to have a common limit value of the constant $G$ we will assume that $\Delta=0$ ($\E^2=\mu$).
From Lemma \ref{lemSuperRecurrencia}
we have
\[
 E_2^n=
 \frac{x_1^0E_2^0\sigma_1^n}{\sigma_1^0x_1^n}=\frac{k\sigma_1^n}{x_1^n}\sim \frac{k\E}{x_1^n}\text{ as $n\to\pm\infty$}.
\]
If we identify the gravitational energy $U$ and the energy of particles 2 and 3 we obtain
\[
 \frac{G\mu}{2x_1^n}=2 \frac{k\E}{x_1^n}\Rightarrow k=\frac{G\mu}{4\E}=\frac{G\E}4.
\]
Proposition \ref{propEscalaTach} gives us that $|x_1^n|\leq 8|k|=2G|\E|$
if the $n$-th collision is tachyonic.
If $m$ is the positive square root of $\mu$ then $|\E|=m$ (because $\Delta=0$).

Consider that particles 1 and 4 are \textit{heavy}, for instance they are neutrons, \textit{i.e.}
$m=1.7\times 10^{-27} kg$.
This gives us a bound of
\[
 2Gm=2\times
 7.4\times 10^{-28}\frac{m}{kg}
 1.7\times 10^{-27} kg= 2.5\times 10^{-54} \text{metres}
\]
for the distance between the neutrons when they enter into the tachyonic collisions.
That is, at this small distance their energies would turn negative for just two consecutive collisions.

\textit{Acknowledgements}. The author thanks the referees for several comments and suggestions that helped to improve the presentation of the article.

\begin{bibdiv}
\begin{biblist}

\bib{Art19}{article}{
author={A. Artigue},
title={Billiards and toy gravitons},
journal={Journal of Statistical Physics},
volume={175},
year={2019},
pages={213--232}}

\bib{BTT2007}{article}{
title={On the Zero Mass Limit of Tagged Particle Diffusion in the 1-d Rayleigh-Gas},
author={P. Bálint},
author={B. Tóth},
author={P. Tóth},
year={2007},
journal={Journal of Statistical Physics},
volume={127},
doi={10.1007/s10955-007-9304-2}}




\bib{Lap86}{article}{
author={I.R. Lapidus},
title={A useful notation for relativistic kinematics},
year={1986},
publisher={Am. J. Phys.},
volume={54},
doi={10.1119/1.14472}}




\end{biblist}
\end{bibdiv}

\end{document}